\newtheorem{theorem}{Theorem}
\newtheorem{lemma}{Lemma}
\newtheorem{claim}{Claim}
\title{On heterochromatic out-directed spanning \\\ trees in tournaments\footnote{Research partially supported by Conacyt, M\'exico and by PAPIIT-M\'exico  project  IN101912.}}
\author{Juan Jos\'e Montellano-Ballesteros  \\ {\it Instituto de Matem\'aticas, }\\\textit{ Universidad Nacional Aut\'onoma de M\'exico} \\ {\tt juancho@math.unam.mx}\\ \\ Eduardo Rivera-Campo  \\ {\it Departamento de Matem\'aticas }\\\ \textit{Universidad Aut\'onoma Metropolitana-Iztapalapa} \\  {\tt erc@xanum.uam.mx} }
\date{}
\begin{document}

\maketitle

\begin{abstract}
Given a tournament $T$, let $h(T)$ be the smallest integer $k$ such that  every arc-coloring of $T$ with $k$ or more colors produces at least  one out-directed spanning  tree of $T$ with no pair of arcs with the same color. In this paper we give the exact value of $h(T)$.\\\

\noindent {\bf Keywords}: Out-directed Tree. Tournament. Heterochromatic

\end{abstract}

\section{Introduction}

Given a graph $G$ and an edge-coloring of $G$, a subgraph $H$ of $G$  is said to be {\it heterochromatic}  if no pair of edges of $H$ have the same  color. Problems concerning the existence of heterochromatic subgraphs with a specific property in edge-colorings of a host graph are known as  {\it anti-Ramsey problems} (see, for instance,  \cite{AHK,ES,HY,KL,MN,SS}). Typically, the host graph $G$ is a complete graph or some graph  with a particular structure, and the property which defines the set of heterochromatic  subgraphs in consideration is that they are  {\it isomorphic to a given graph $H$} or that they are subgraphs  of $G$ with a general property like, for example, being edge-cuts or spanning trees of  $G$  (see \cite{BV,CHH,JW,Mo,M}).  
 
 A  {\it tournament}  is a digraph $D= (V(D), A(D))$ such that for every pair $\{x, y\} \subseteq V(D)$, either $xy \in A(D)$ or $yx\in A(D)$ but not both. A spanning tree $S$ of a tournament $T$ is an  {\it out-directed spanning tree} of $T$ if there is a root vertex $r$ of $S$ such that for each vertex $u \in V(S)$, the unique $r-u$ path in $S$ is directed from $r$ to $u$. 

In this paper,  the host graphs are tournaments, and the property that defines the set of heterochromatic subgraphs in consideration is that of being an out-directed spanning tree of the corresponding tournament.

Let $T= (V(T), A(T))$ be a tournament.  An {\it arc-coloring of $T$} is a function $\Gamma: A(T)\rightarrow C$, where $C$ is a set of  ``colors"; if $|\Gamma[A(T)]| = k$ we say that $\Gamma$ is a $k$-arc-coloring of $T$. 
A subdigraph $H$ of $T$  is said to be {\it heterochromatic}  if no pair of arcs of $H$ have the same  color. We define $h(T)$ as the smallest integer $k$ such that every $k$-arc-coloring of  $T$ produces at least one heterochromatic-out  directed spanning tree of $T$. Our main result is the following theorem:

 \begin{theorem}  \label{teorema}  Let $T$ be a tournament of order $n\geq 3$. Then $h(T) ={{n}\choose{2}} - \delta^-_3(T) + 2$, where $\delta^-_3(T) = min\{ d_T^-(x) + d_T^-(y) + d_T^-(w) : \{x, y, w\}\subseteq V(T)\}$.  Moreover, if the arcs of $T$ are colored with $h(T) - 1$ colors, and there is no heterochromatic out-directed spanning tree of $T$, then there is a triple $\{x, y, w\} \subseteq V(T)$ such that  $\delta^-_3(T) = d_T^-(x) + d_T^-(y) + d_T^-(w)$, all the in-arcs of $x, y,$ and $w$ receive the same color and each of the remaining arcs of $T$ receives a new different color.
\end{theorem}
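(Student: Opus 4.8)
\emph{Overview and lower bound.} I would prove the lower bound $h(T)\ge {n\choose2}-\delta^-_3(T)+2$ by an explicit colouring, and obtain the matching upper bound and the extremal characterization from a single argument built around a maximum heterochromatic out-forest of $T$; the arithmetic is short, but localizing the colour repetitions on three vertices is the real work. For the lower bound, fix a triple $\{x,y,w\}$ with $d_T^-(x)+d_T^-(y)+d_T^-(w)=\delta^-_3(T)$, colour every arc of $T$ entering $x$, $y$ or $w$ with one fixed colour, and give each remaining arc its own new colour. The in-arcs of $x$, of $y$ and of $w$ are pairwise disjoint sets, so the arcs receiving the fixed colour number exactly $d_T^-(x)+d_T^-(y)+d_T^-(w)=\delta^-_3(T)$, and the colouring uses ${n\choose2}-\delta^-_3(T)+1$ colours. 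In any out-directed spanning tree $S$ at most one of $x,y,w$ is the root, so at least two of them have their in-arc in $S$, and those two arcs share the fixed colour; hence $S$ is not heterochromatic and $h(T)\ge {n\choose2}-\delta^-_3(T)+2$.

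\emph{Upper bound: setup.} Suppose $\Gamma$ is a $k$-arc-colouring of $T$ admitting no heterochromatic out-directed spanning tree. Call a spanning subdigraph $F$ of $T$ a heterochromatic out-forest if each of its weak components is an out-directed tree and no two arcs of $F$ have the same colour; fix such an $F$ with the maximum number of arcs and let $V_1,\dots,V_c$ and $r_1,\dots,r_c$ be the vertex sets and roots of its components. Then $c\ge2$, since otherwise $F$ itself is a heterochromatic out-directed spanning tree. Maximality gives two facts. (1) For $i\ne j$ the arc of $T$ between $r_i$ and $r_j$ has a colour already used by $F$ — otherwise adding it to $F$ merges two components into a larger out-tree; hence the ${c\choose2}$ arcs among the roots and the $n-c$ arcs of $F$ lie in only $n-c$ colour classes, so ${n\choose2}-k=\sum_a(|\Gamma^{-1}(a)|-1)\ge {c\choose2}$ (the sum over colours $a$), which already yields $k\le{n\choose2}-\delta^-_3(T)+1$ when $c$ is large and leaves only small $c$ to treat. (2) If $v\in V_j\setminus\{r_j\}$ with $vr_j\in A(T)$, then either the colour of $vr_j$ is already used by $F$, or exchanging $vr_j$ for the first arc of the $r_j$–$v$ path of $F$ gives another maximum heterochromatic out-forest in which $r_j$ is no longer a root.

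\emph{The key step and the obstacle.} It suffices to exhibit a triple $\{x,y,w\}$ for which, with $I$ the set of in-arcs of $x$, $y$ and $w$ (so $|I|=d_T^-(x)+d_T^-(y)+d_T^-(w)$), one has $|I|-|\Gamma[I]|\ge\delta^-_3(T)-1$: indeed each colour class contributes to ${n\choose2}-k=\sum_a(|\Gamma^{-1}(a)|-1)$ at least what its intersection with $I$ contributes, whence ${n\choose2}-k\ge|I|-|\Gamma[I]|\ge\delta^-_3(T)-1$, i.e.\ $k\le{n\choose2}-\delta^-_3(T)+1$. To produce such a triple I would choose $F$ among all maximum heterochromatic out-forests so that, in addition, no fresh-coloured in-arc of a root can be used to re-root a component as in (2); then, using (1) and (2), at least two of $x,y,w$ may be taken to be roots (all of whose in-arcs then repeat a colour of $F$) while the third is a vertex whose unique $F$-in-arc repeats one of those colours, and counting these repeated arcs gives the inequality. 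The main obstacle is exactly this localization: one must use the no-heterochromatic-out-directed-spanning-tree hypothesis to show that the colour repetitions are forced onto the in-arcs of three vertices rather than being spread out — equivalently, to rule out that some fresh colour survives near those vertices and enlarges $F$.

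\emph{The ``Moreover'' part.} If $k={n\choose2}-\delta^-_3(T)+1$, i.e.\ ${n\choose2}-k=\delta^-_3(T)-1$, then every inequality above must be tight: $|I|-|\Gamma[I]|=\delta^-_3(T)-1$, every colour class disjoint from $I$ is a singleton, and no colour class meets both $I$ and $A(T)\setminus I$. Since $|\Gamma[I]|\ge 1$ and $|I|=d_T^-(x)+d_T^-(y)+d_T^-(w)\ge\delta^-_3(T)$, this forces $d_T^-(x)+d_T^-(y)+d_T^-(w)=\delta^-_3(T)$, and the sharpened form of the localization argument forces $|\Gamma[I]|=1$; that is, $\{x,y,w\}$ attains $\delta^-_3(T)$, all in-arcs of $x$, $y$, $w$ receive one colour, and every other arc receives its own colour — precisely the colouring described in the theorem.
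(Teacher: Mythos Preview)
Your lower bound is exactly the paper's Lemma~1. Your upper-bound strategy, however, is genuinely different from the paper's and has a real gap at the place you yourself flag as ``the main obstacle.''

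The paper does \emph{not} argue via a maximum heterochromatic out-forest. It proceeds by induction on $n$, classifies each vertex as type~1 (some in-arc has a colour private to that vertex), type~2 (no such in-arc, but in-arcs receive at least two colours), or type~3 (all in-arcs monochromatic, colour not private), and proves three technical lemmas: $c(x)\ge n-4$ for type~1, $c(x)=n-4$ for type~2, and at most $n-2$ vertices are of type~1. It then picks two vertices $x,y$ not of type~1, builds the digraph $D$ consisting of all arcs except the in-arcs of $x$ and $y$ (plus the arc $xy$), shows by a direct argument (Claim~2) that $D$ is heterochromatic and contains all $h(T)-1$ colours, and then a short counting forces the set $W$ of vertices unreachable from $x$ in $D$ to be a single vertex $w$ with $d^-(x)+d^-(y)+d^-(w)=\delta_3^-(T)$. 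The ``moreover'' part is then handled by a swap argument on $D$ together with the original choice of $x,y$ among the type-3 vertices.

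Your reduction ``it suffices to exhibit a triple with $|I|-|\Gamma[I]|\ge\delta_3^-(T)-1$'' is correct arithmetic, but you do not actually produce such a triple. The out-forest facts (1) and (2) you state tell you only that arcs between roots, and certain in-arcs of roots from inside their own component, repeat colours of $F$; they do not by themselves force those repetitions to concentrate on \emph{three} vertices' in-arcs. Your sentence ``at least two of $x,y,w$ may be taken to be roots \dots\ while the third is a vertex whose unique $F$-in-arc repeats one of those colours'' is an assertion, not an argument: you have not shown why two roots and one further vertex must together absorb at least $\delta_3^-(T)-1$ colour repetitions, nor why re-rooting via (2) terminates in a favourable configuration rather than cycling. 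Note also that your crude bound ${n\choose2}-k\ge{c\choose2}$ only yields the desired inequality when ${c\choose2}\ge\delta_3^-(T)-1$; since $\delta_3^-(T)$ can be of order $n$ (near-regular tournaments), ``large $c$'' would have to mean $c\gtrsim\sqrt{n}$, and the genuinely hard case $c=2$ or $c=3$ (which includes the extremal colourings) is exactly where your sketch is silent. Finally, in the ``moreover'' paragraph the implication ``the sharpened form of the localization argument forces $|\Gamma[I]|=1$'' presupposes the very localization you have not established, so the extremal characterization is also unproved.
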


\section{Notation and Preliminary Results}

Let $D = (V(D), A(D))$ be a digraph and $x$ be a vertex of $D$. We denote by $N_D^+(x)=\{v \in V(D): xv \in A(D) \}$ and $N_D^-(x)=\{v \in V(D): vx \in A(D) \}$  the sets of  {\it out-neighbors}  and of  {\it in-neighbors} of $x$ in $D$, respectively.  Likewise, we denote by  $d_D^+(x)=|N_D^+(x)|$ and $d_D^-(x)=|N_D^-(x)|$  the {\it ex-degree} and the {\it in-degree} of $x$ in $D$, respectively. 

For every $Q\subseteq V(D)$, let $F_D^+(Q)= \{zw\in A(D) : z\in Q\hbox{ and } w\in V(D)\setminus Q\}$, $F_D^-(Q) = \{wz : z\in Q \hbox{ and } w\in V(D)\setminus Q\}$ and $F_D(Q) = F_D^+(Q)\cup F_D^-(Q)$. Given $x\in V(D)$ the sets $F_D^+(\{x\}), F_D^-(\{x\})$ and $F_D(\{x\})$  are called the set of ex-arcs, the set of in-arcs and the set of arcs of $x$, respectively. For $Q, R \subseteq V(D)$, we denote by $(Q\rightarrow R)$ the set $ \{ xy \in A(D) : x\in Q \hbox{ and } y \in R\}$. 
 
Let $\Gamma : A(D) \rightarrow C$ be an arc-coloring of $D$. We denote by  $C(x)$ the set of colors that appear only on arcs of $D$ incident to  $x$, and by $c(x)$ the number of colors in $C(x)$. A color $i \in C$  is a $\Gamma_D$-{\it singular color}  if $|\Gamma^{-1}(i)| = 1$.
 
For any vertex $x \in V(D)$ and any arc $wy \in A(D)$, we denote by $D-x$ and $D-wy$ the digraphs obtained from $D$ by deleting the vertex $x$ and the arc $wy$, respectively. For an arc $zy \notin A(D)$, $D+zy$ is the digraph obtained from $D$ by adding the arc $zy$.
 
We say that a vertex $z\in V(D)$ is reachable from a vertex $x$ in $D$  if there is a directed path in $D$ from $x$ to $z$. 

Let  $\delta^-_3(D) = min\{ d_D^-(x) + d_D^-(y) + d_D^-(w) : \{x, y, w\}\subseteq V(D)\}$.

\begin{lemma}\label{cotainf}  
Let $T$ be a tournament of order $n\geq 3$. Then $$h(T) \geq {{n}\choose{2}} - \delta^-_3(T) + 2.$$ 
\end{lemma}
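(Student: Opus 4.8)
The plan is to prove the claimed inequality by exhibiting a single arc-coloring of $T$ that uses exactly ${n\choose 2}-\delta^-_3(T)+1$ colors and admits no heterochromatic out-directed spanning tree; once such a coloring is produced, the bound $h(T)\geq {n\choose 2}-\delta^-_3(T)+2$ follows at once from the definition of $h(T)$.

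First I would fix a triple $\{x,y,w\}\subseteq V(T)$ with $d^-_T(x)+d^-_T(y)+d^-_T(w)=\delta^-_3(T)$, and define a coloring $\Gamma$ that assigns one common color, say $\alpha$, to every arc of $F^-_T(\{x\})\cup F^-_T(\{y\})\cup F^-_T(\{w\})$ and a distinct private color to each of the remaining arcs of $T$. Since an arc of $T$ lies in $F^-_T(\{z\})$ precisely when its head is $z$, the three sets $F^-_T(\{x\})$, $F^-_T(\{y\})$, $F^-_T(\{w\})$ are pairwise disjoint, so their union has exactly $d^-_T(x)+d^-_T(y)+d^-_T(w)=\delta^-_3(T)$ arcs. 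Because a tournament has at most one vertex of in-degree zero, $\delta^-_3(T)\geq 1$, so color $\alpha$ actually occurs; hence $\Gamma$ uses $1+\big({n\choose 2}-\delta^-_3(T)\big)={n\choose 2}-\delta^-_3(T)+1$ colors.

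Next I would check that no out-directed spanning tree $S$ of $T$ is heterochromatic under $\Gamma$. Let $r$ be the root of $S$. In an out-directed spanning tree every vertex other than the root has exactly one in-arc lying in the tree, and in $T$ that arc is an in-arc of the same vertex. Since $r$ is a single vertex, at least two of $x,y,w$ are different from $r$, and each such vertex therefore contributes to $S$ an arc of $T$ whose head is in $\{x,y,w\}$, i.e.\ an arc colored $\alpha$. These arcs are distinct (distinct heads), so $S$ contains at least two arcs of color $\alpha$ and is not heterochromatic. Thus the $\big({n\choose 2}-\delta^-_3(T)+1\big)$-arc-coloring $\Gamma$ produces no heterochromatic out-directed spanning tree, so $h(T)>{n\choose 2}-\delta^-_3(T)+1$, which is the assertion.

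I do not expect a serious obstacle here: the only points requiring care are the bookkeeping for the number of colors (the disjointness of the three in-arc sets and the fact that color $\alpha$ is used) and the observation that at most one of $x,y,w$ can serve as the root of an out-directed spanning tree, which is exactly what forces the repetition of color $\alpha$. The small cases, in particular $n=3$ where $\delta^-_3(T)=3$ and $\Gamma$ is simply the monochromatic coloring, are handled by the very same reasoning.
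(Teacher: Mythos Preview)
Your proposal is correct and follows essentially the same argument as the paper: choose a minimizing triple, give all their in-arcs one common color and all other arcs distinct colors, and observe that any out-directed spanning tree has at least two non-root vertices among the triple and hence two arcs of the common color. Your write-up is in fact a bit more careful than the paper's (you justify disjointness of the three in-arc sets and that the common color is actually used), but the idea is identical.
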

\begin{proof} 
Let $\{x, y, w\}\subseteq V(T)$ such that $d_T^-(x) + d_T^-(y) + d_T^-(w) = \delta^-_3 (T)$  and color the arcs of $T$ with ${{n}\choose{2}} - \delta^-_3(T) + 1$ colors in the following way: all the in-arcs of $x, y$  and $w$ receive the same color, say color black, and the remaining  ${{n}\choose{2}} - \delta^-_3(T)$ arcs receive  ${{n}\choose{2}} - \delta^-_3(T)$ new colors.  

Given an out-directed spanning tree $S$ of $T$ we can assume,  without loss of generality,  that neither  $x$ nor $y$ is the root of $S$, and therefore $d^-_S(x) =  d^-_S(y)= 1$. From here we see that $S$ has at least two black arcs, thus $S$ is not heterochromatic and the lemma follows.
\end{proof}

\section{Proof of Theorem \ref{teorema}}

Lemma \ref{cotainf} gives the lower bound for $h(T)$ in Theorem  \ref{teorema}. The proof of the upper bound and of the remainder of the theorem is by induction on $n$. For better readability, we break down the proof into several lemmas. 

It is not hard to see that if $T$ is a tournament of order 3, and $\Gamma$ is and arc-coloring of $T$ with no heterochromatic out-directed spanning tree,  then $\Gamma$ uses $1 = {{3}\choose{2} }- \delta^-_3(T) +1$ color.  It is also clear that $V(T) = \{ x,y,z \} $ is such that $d_T^-(x) + d_T^-(y) + d_T^-(z) = \delta^-_3 (T) = 3$   and that the three  in-arcs of $x$, $y$ and $z$  receive the same color. This shows that Theorem~\ref{teorema}  holds for tournaments of order 3.

Let $T$ be a tournament of order  $n \geq 4$. For the rest of the proof we assume as inductive hypothesis  that Theorem~\ref{teorema} holds for every tournament of order $m$, with $3\leq m < n$.   

Let $\Gamma$ be an arc-coloring of $T$ which uses $h(T)-1$ colors and produces no heterochromatic out-directed spanning trees of $T$.  Observe that by Lemma~\ref{cotainf}, $h(T) \geq {{n}\choose{2}} - \delta^-_3(T) + 2$ and therefore the number of colors in $\Gamma[A(T)]$ (from now on $\Gamma[T]$ for short)  is at least  $ {{n}\choose{2}} - \delta^-_3(T) + 1.$  

A vertex $x$ of $T$ is of {\it type} 1  if there is an in-arc $e$ of $x$ such that $\Gamma(e) \in C(x)$; of {\it type}  2 if  none of the in-arcs of $x$ receive a color in $C(x)$ and there are at least two in-arcs of $x$ which receive different colors; and of  {\it type} 3  if none of the in-arcs of $x$ receive a color in $C(x)$ and all the in-arcs of $x$ receive the same color.

The next  three  lemmas will show some  properties of the vertices of type 1 and 2,  and that there are at most $n-2$ vertices of type 1. With these at hand,  we will return to the proof of Theorem \ref{teorema}. 

\begin{lemma}\label{c(x)1}
If $x$ is a vertex of $T$ of type $1$, then  $c(x) \geq n-4$. 
\end{lemma}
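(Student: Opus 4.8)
The plan is to argue by contradiction: assume $c(x)\le n-5$ and produce a heterochromatic out-directed spanning tree of $T$, contradicting the choice of $\Gamma$. The engine is the inductive hypothesis applied to $T-x$, which is a tournament of order $n-1$ with $3\le n-1<n$. First I would restrict $\Gamma$ to the arcs of $T-x$, obtaining an arc-coloring $\Gamma'$. By the definition of $C(x)$, the colors of $\Gamma$ missing from $\Gamma'$ are exactly the $c(x)$ colors of $C(x)$, so $\Gamma'$ uses $h(T)-1-c(x)$ colors. Combining $h(T)-1\ge {{n}\choose{2}}-\delta^-_3(T)+1$ (Lemma~\ref{cotainf}), the assumption $c(x)\le n-5$, and the identity ${{n}\choose{2}}-n={{n-1}\choose{2}}-1$, this shows $\Gamma'$ uses at least ${{n-1}\choose{2}}-\delta^-_3(T)+5$ colors.

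Next I would bound $h(T-x)$ from above. Deleting $x$ decreases the in-degree of each remaining vertex by at most one, so summing over any triple of $V(T)\setminus\{x\}$ gives $\delta^-_3(T-x)\ge\delta^-_3(T)-3$; hence, by the inductive hypothesis, $h(T-x)={{n-1}\choose{2}}-\delta^-_3(T-x)+2\le{{n-1}\choose{2}}-\delta^-_3(T)+5$. Therefore $\Gamma'$ uses at least $h(T-x)$ colors. Since a tree that is heterochromatic with respect to any coloring obtained by merging color classes of $\Gamma'$ is still heterochromatic with respect to $\Gamma'$, the definition of $h(T-x)$ yields a heterochromatic out-directed spanning tree $S'$ of $T-x$, rooted at some vertex $r\in V(T)\setminus\{x\}$.

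Finally I would extend $S'$ over $x$, using the hypothesis that $x$ has type $1$: there is an in-arc $ux$ of $x$ with $\Gamma(ux)\in C(x)$. As $S'$ is a subdigraph of $T-x$, no arc of $S'$ is incident to $x$, so $\Gamma(ux)$ does not occur on $S'$. Then $S'+ux$ is an out-directed spanning tree of $T$ rooted at $r$ --- the vertex $x$ being reached along the directed path from $r$ to $u$ in $S'$ followed by the arc $ux$ --- and it is heterochromatic, contradicting the choice of $\Gamma$. Hence $c(x)\ge n-4$; for $n=4$ the statement is trivially true because $c(x)\ge 0$.

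I do not expect a genuine obstacle: this is a delete-one-vertex induction, and the only step requiring care is the color bookkeeping --- verifying that at least $h(T-x)$ colors survive on $T-x$, which comes down to the identity ${{n}\choose{2}}-n={{n-1}\choose{2}}-1$ and the inequality $\delta^-_3(T-x)\ge\delta^-_3(T)-3$ --- while the one-arc extension licensed by the type-$1$ in-arc is immediate.
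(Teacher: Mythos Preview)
Your proposal is correct and follows essentially the same approach as the paper: both arguments hinge on the type-$1$ in-arc to pass between heterochromatic spanning trees of $T$ and $T-x$, invoke the inductive hypothesis on $T-x$, and use the estimate $\delta^-_3(T)-\delta^-_3(T-x)\le 3$. The only cosmetic difference is that the paper argues directly (no heterochromatic tree in $T-x$ $\Rightarrow$ at most ${n-1\choose 2}-\delta^-_3(T-x)+1$ colors survive $\Rightarrow$ $c(x)\ge n-4$), whereas you take the contrapositive route via contradiction; the content is identical.
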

\begin{proof} 
Since $x$ is of type 1, there is an arc $yx \in A(T)$ such that $\Gamma(yx)\in C(x)$. Since  $\Gamma(yx) \not\in \Gamma[T - x ]$, the tournament $T-x$ has no  heterochromatic out-directed spanning tree $S$, otherwise $S+yx$ would be a heterochromatic out-directed spanning tree of $T$,   which is not possible. Therefore, by our  induction hypothesis, the number of colors appearing in   $\Gamma[T-x]$ is at most  ${{n-1}\choose{2}} - \delta^-_3(T-x) + 1$. Thus $$c(x) \geq {{n}\choose{2}} - \delta^-_3(T) + 1 - \Bigg( {{n-1}\choose{2}} - \delta^-_3(T-x) + 1 \Bigg) = n - 1 + \delta^-_3(T-x) - \delta^-_3(T) .$$ Now just observe that $ \delta^-_3(T) - \delta^-_3(T-x)\leq 3$ and therefore $c(x)\geq n-4$. 
\end{proof}

\begin{lemma}\label{c(x)2}
If $x$ is a vertex of $T$ of type $2$, then $d_T^+(x) \geq c(x) = n-4$.
\end{lemma}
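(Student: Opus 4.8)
The plan is to prove the two assertions of the lemma separately: the easy inequality $d_T^+(x)\ge c(x)$, and the equality $c(x)=n-4$, which I would split into a lower and an upper bound. The inequality $d_T^+(x)\ge c(x)$ is immediate: since $x$ is of type $2$, no in-arc of $x$ receives a color of $C(x)$, and every color of $C(x)$ appears, by definition, only on arcs incident to $x$; hence each of the $c(x)$ colors of $C(x)$ appears on an ex-arc of $x$, and as distinct colors lie on distinct arcs, $x$ has at least $c(x)$ ex-arcs.

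For $c(x)\ge n-4$ I would mimic the proof of Lemma~\ref{c(x)1}: it suffices to show that $T-x$ has \emph{no} heterochromatic out-directed spanning tree, because then the induction hypothesis gives $|\Gamma[T-x]|\le{{n-1}\choose{2}}-\delta^-_3(T-x)+1$, and since every color of $\Gamma[T]$ not in $C(x)$ occurs on an arc of $T-x$ we get
$$c(x)\ge|\Gamma[T]|-|\Gamma[T-x]|\ge\Bigg({{n}\choose{2}}-\delta^-_3(T)+1\Bigg)-\Bigg({{n-1}\choose{2}}-\delta^-_3(T-x)+1\Bigg)\ge n-4,$$
using $\delta^-_3(T)-\delta^-_3(T-x)\le 3$. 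To establish the missing claim I would argue by contradiction: let $S$ be a heterochromatic out-directed spanning tree of $T-x$ with root $r$, and let $y_1x,\ y_2x$ be in-arcs of $x$ with $\Gamma(y_1x)\ne\Gamma(y_2x)$ (these exist since $x$ is of type $2$). For each $i$, $S+y_ix$ is an out-directed spanning tree of $T$, so it must fail to be heterochromatic, forcing $\Gamma(y_ix)\in\Gamma[S]$; more generally, every color on an in-arc of $x$ lies in $\Gamma[S]$. I would then transform $S$ into a heterochromatic out-directed spanning tree of $T$: delete from $S$ the arc carrying one of the colors $\Gamma(y_ix)$, reattach $x$ by an in-arc of $x$ of that same color (leaving the palette used so far unchanged), and splice the detached subtree of $S$ back in through an ex-arc of $x$; since a color of $C(x)$ used on such an ex-arc cannot occur in $T-x$, this last arc supplies a genuinely new color, and the result is heterochromatic, contradicting the choice of $\Gamma$.

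For $c(x)\le n-4$ I would argue directly. Suppose $c(x)\ge n-3$. By the first step $d_T^+(x)\ge c(x)\ge n-3$, while $d_T^-(x)\ge 2$ because $x$ has two in-arcs of different colors; as $d_T^+(x)+d_T^-(x)=n-1$, this forces $d_T^+(x)=n-3$ and $d_T^-(x)=2$, the $n-3$ ex-arcs of $x$ carry $n-3$ distinct colors all in $C(x)$, and the two in-arcs $y_1x,y_2x$ carry distinct colors $a,b$, neither in $C(x)$. Write $N_T^+(x)=\{v_1,\dots,v_{n-3}\}$ and, after possibly exchanging the roles of $y_1,y_2$ and of $a,b$, assume $y_1y_2\in A(T)$. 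The out-tree rooted at $y_1$ with arc set $\{y_1y_2,\ y_2x\}\cup\{xv_j:1\le j\le n-3\}$ spans $V(T)$ and has colors $\Gamma(y_1y_2)$, $b$, and the $n-3$ colors of $C(x)$, pairwise distinct unless $\Gamma(y_1y_2)=b$ (note $\Gamma(y_1y_2)\notin C(x)$, as $y_1y_2$ is not incident to $x$). If $\Gamma(y_1y_2)=b$, then since the in-arc $y_1y_2$ of $y_2$ has color $b\ne a$, not every in-arc of $y_2$ is colored $a$; pick an in-arc $gy_2$ with $\Gamma(gy_2)\ne a$ (here $g\ne x$ because $xy_2\notin A(T)$), and use instead the out-tree rooted at $y_1$ with arc set $\{y_1x\}\cup\{xv_j:1\le j\le n-3\}\cup\{gy_2\}$, whose colors $a$, the $n-3$ colors of $C(x)$, and $\Gamma(gy_2)$ are pairwise distinct. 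In either case we obtain a heterochromatic out-directed spanning tree of $T$, a contradiction; hence $c(x)\le n-4$.

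The main obstacle is the claim, in the lower-bound step, that $T-x$ has no heterochromatic out-directed spanning tree. Unlike the type-$1$ situation of Lemma~\ref{c(x)1}, no color of $x$ is automatically absent from $\Gamma[T-x]$, so one must genuinely manufacture a heterochromatic out-directed spanning tree of $T$ from a hypothetical one of $T-x$; arranging the deleted arc, the chosen in-arc of $x$, and the reconnecting ex-arc to be mutually compatible — in particular guaranteeing that the detached subtree can be reattached through $x$ and that the ex-arc used carries a color of $C(x)$ — is the technical heart of the argument and will probably require a short case analysis, possibly using both in-arc colors $\Gamma(y_1x),\Gamma(y_2x)$ and the freedom in choosing which arc of $S$ to delete.
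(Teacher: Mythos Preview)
Your inequality $d_T^+(x)\ge c(x)$ and your upper bound $c(x)\le n-4$ are correct and essentially coincide with the paper's treatment (the paper's upper-bound construction is the single observation that one of $F_T^+(\{x\})\cup\{y_1x,y_1y_2\}$, $F_T^+(\{x\})\cup\{y_1y_2,y_2x\}$ is heterochromatic; your second case is an unnecessary detour, but harmless).

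The genuine gap is in the lower bound. Your surgery --- delete from $S$ the arc of colour $c_i$, attach $x$ via $y_ix$, then reattach the severed subtree through an ex-arc of $x$ whose colour lies in $C(x)$ --- need not succeed. The severed subtree has a \emph{uniquely determined} root $v$ (the head of the deleted arc), and to splice through $x$ you need both $xv\in A(T)$ and $\Gamma(xv)\in C(x)$; neither is forced. With only the two colours $c_1,c_2$ at your disposal you get at most two candidate roots, and nothing prevents both from being in-neighbours of $x$ (for instance $v\in\{y_1,y_2\}$) or from receiving ex-arc colours outside $C(x)$. If moreover $y_i$ lies in the severed piece rather than the rooted piece, the reattachment must still enter at $v$, so bringing $x$ in does not help at all. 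The ``short case analysis'' you anticipate does not close this, and it is not even clear that the underlying claim ``$T-x$ has no heterochromatic out-directed spanning tree under $\Gamma$'' is true.

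The paper sidesteps the difficulty by a recolouring trick: on $T-x$ it merges $c_2$ into $c_1$, obtaining $\Gamma'$. Any $\Gamma'$-heterochromatic tree $S$ of $T-x$ is $\Gamma$-heterochromatic and misses at least one of $c_1,c_2$, so one of $S+y_1x$, $S+y_2x$ extends it to $T$ with no surgery whatsoever. This clean argument, however, yields only $c(x)\ge n-5$; the remaining unit is recovered by a case analysis that crucially invokes the \emph{structural} half of the induction hypothesis (the ``Moreover'' clause of Theorem~\ref{teorema}, pinning down the unique extremal $(h-1)$-colouring of $T-x$). Your plan never calls on that structural information, which is precisely why the direct route stalls.
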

\begin{proof} 
By definition of type 2, none of the colors of the in-arcs of $x$ is in $C(x)$,  so all the colors from $C(x)$ appear on the out-arcs of $x$ and therefore  $d_T^+(x) \geq c(x)$.  Also by definition, there are vertices $y_1, y_2 \in N_T^-(x)$ such that $c_1= \Gamma(y_1x) \not = \Gamma(y_2x) =c_2$ with $c_1, c_2\not\in C(x)$.   

Let $\Gamma'$ be an arc-coloring of $T-x$  obtained from $\Gamma$ by recoloring the arcs of color $c_2$ with  color $c_1$. 
 
Suppose $T-x$ has an out-directed spanning tree $S$ which is heterochromatic with respect to  $\Gamma'$. Clearly  $S$ is also  heterochromatic with respect to  $\Gamma$ and it is such that either color $c_1$ or color  $c_2$  does not appear in $\Gamma[S]$. Thus, either $S+y_1x$ or $S+y_2x$ is a heterochromatic out-directed spanning tree of $T$ with respect to $\Gamma$, which is not possible. Therefore $T-x$ has no heterochromatic out-directed spanning tree with respect to $\Gamma'$. By our induction hypothesis, there are at most ${{n-1}\choose{2}} - \delta^-_3(T-x) + 1$ colors in $\Gamma'[T-x]$. It follows at most ${{n-1}\choose{2}} - \delta^-_3(T - x ) + 2$ colors of $\Gamma $ are used in $T-x$ which implies $$c(x) \geq {{n}\choose{2}} - \delta^-_3 (T) + 1 - \Bigg ( {{n-1}\choose{2}} - \delta^-_3(T-x) + 2\Bigg) = n-2 - \delta^-_3(T) + \delta^-_3(T-x)\geq n-5.$$
  
If $c(x) = n-5$, each of the following must happen: {\it i}) $ \delta^-_3(T) - \delta^-_3(T - x )\ = 3$;  {\it ii})   $|\Gamma [T - x ]| = {{n-1}\choose{2}} - \delta^-_3(T - x ) + 2$;  {\it iii})  $|\Gamma' [T - x ]| = {{n-1}\choose{2}} - \delta^-_3(T - x ) + 1$ and {\it iv}) $T-x$ has no heterochromatic out-directed spanning tree with respect to $\Gamma'$. 
  
By  induction $h(T - x)={{n-1}\choose{2}} - \delta^-_3(T-x) + 2$ and therefore, according to {\it iii}), $\Gamma'$ is an arc-coloring of $T-x$ with $h(T-x)-1$ colors. Also by induction, there is a triple $\{x_1, x_2, x_3\}  \subseteq V(T - x )$ such that $\delta^-_3(T - x ) = d_{T - x }^-(x_1) + d_{T - x }^-(x_2) + d_{T - x }^-(x_3)$, all the in-arcs of $x_1, x_2,$ and $x_3$ have the same color in $\Gamma' $ and each of the remaining arcs of $T - x $ has a singular color in $\Gamma'$. 

Recall that there are arcs in $T-x$ with colors $c_1$ and $c_2$, since  $c_1, c_2\not\in C(x)$. Therefore $c_1$ is the non-singular color in $\Gamma' $ and all the in-arcs of  $x_1$,  $x_2,$ and $x_3$ have  color $c_1$ in $\Gamma'$. This implies that  all the in-arcs  of  $x_1, x_2,$ and  $x_3$ have color $c_1$ or color $c_2$ in $\Gamma$;   and each of the remaining arcs of $T - x $ has a singular color in  $\Gamma$. 

By {\it i}), $ \delta^-_3(T) - \delta^-_3(T - x )\ = 3$ and this implies  $\{x_1, x_2, x_3\}\subseteq N_T^+(x)$. Therefore $\{y_1, y_2\} \subseteq  V(T)\setminus \{x, x_1, x_2, x_3\}$ and  $N^+_T(x) \subseteq V(T)\setminus \{x, y_1, y_2\}$.   Since $c(x) = n-5$, it follows that  there is at least one vertex $z\in \{x_1, x_2, x_3\}$ such that $\Gamma(xz)\in C(x)$. Without loss of generality assume $z=x_1$. 

\noindent {\it Case 1. } $\{ \Gamma(xx_1), \Gamma(xx_2), \Gamma(xx_3)\} \cap  C(x) = \Gamma(xx_1)$.
  
\noindent   The ex-arcs of $x$ with the other $(n-6)$ colors of $C(x)$ appear in $(x\rightarrow [V(T)\setminus\{x, x_1, x_2, x_3\}])$. Thus $N^-_T(x) = \{y_1, y_2\}$ and $\delta^-_T (x) = 2$.  Since  $ \delta^-_3(T) - \delta^-_3(T - x )\ = 3$, it follows that $\delta^-_3(T) =  d_{T}^-(x_1) + d_{T}^-(x_2) + d_{T}^-(x_3)$ and therefore $\delta^-_T(x_i) \leq 2$ for $i=1, 2, 3.$ Since $\{x_1, x_2, x_3\}\subseteq N_T^+(x)$, it follows that $\{x_1, x_2, x_3\}$ induces a directed cycle with length 3 in $T$ (with colors $c_1$ and $c_2$), and $V(T)\setminus\{x, x_1, x_2, x_3\} \subseteq N^+(x_i)$ for $i=1, 2, 3$, where each of  the arcs in $\bigg(\{x_1, x_2, x_3\}\rightarrow [V(T)\setminus\{x, x_1, x_2, x_3\}]\bigg)$ receives a $\Gamma_{T - x }$-singular color (none of them a color in $C(x)$).   Therefore,  the tournament $H$ induced by $V(T)\setminus \{x, x_1\}$ is a heterochromatic tournament in which either $c_1$ or $c_2$ appear, but not both.  Thus, in $H$ there is a hamiltonian heterochromatic path $P$ where, without loss of generality,  color $c_1$ does not appear. Therefore $E(P) \cup \{ y_2x \} \cup \{ xx_1 \}$ induces a heterochromatic out-directed spanning tree  of $T$ which  is not possible. 
  
\noindent {\it Case 2. } $|\{ \Gamma(xx_1), \Gamma(xx_2), \Gamma(xx_3)\} \cap  C(x)| \geq 2$.

\noindent  Suppose $\Gamma(xx_2)\in C(x)$ and  $\Gamma(xx_1) \not = \Gamma(xx_2)$. Consider the tournament $H$ induced by $V(T)\setminus\{x, x_1, x_2\}$ and let $P$ be a hamiltonian path in $H$. Except for the in-arcs of $x_3$, which receive color $c_1$ or $c_2$,  all the other arcs in $H$ receive $\Gamma_{T - x }$-singular colors. Thus $P$ is a heterochromatic path in which either color $c_1$ or color $c_2$ appear, but not both.  Without loss of generality, suppose color $c_1$ does not appear in $P$.  In this case $E(P) \cup \{y_2x \} \cup \{xx_1, xx_2 \} $ induces a heterochromatic out-directed spanning tree of $T$ which  again is not possible.

From Case 1 and Case 2,  it follows that $c(x)\geq n-4$.  Suppose  $c(x) \geq n-3$. Since $d_T^+(x) \geq c(x)$ and $\Gamma(y_1x), \Gamma(y_2x)\not\in C(x)$,  all the ex-arcs of $x$ receive  different colors and all of them lie in $C(x)$. Since $\Gamma(y_1x)=c_1\not=c_2=\Gamma(y_2x)$ and 
the color of the arc with endpoints $y_1$ and $y_2$ is not in $C(x)$,  it is not hard to see that either  $F^+_T(\{x\})\cup  \{y_1x, y_1y_2\}$ or $F^+_T(\{x\})\cup \{y_1y_2, y_2x\}$ induces a heterochromatic out-directed spanning tree of $T$ which is not possible.  Therefore $c(x) = n-4$ and Lemma \ref{c(x)2} follows. \end{proof}

\begin{lemma} \label{tipo1}  
There are at most $n-2$ vertices of $T$ of type 1.
\end{lemma}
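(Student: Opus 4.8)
The plan is to argue by contradiction. Suppose $T$ has at least $n-1$ vertices of type $1$; I will build a heterochromatic out-directed spanning tree of $T$, contradicting the choice of $\Gamma$. Fix a vertex $v$ of $T$ and let $x_1,\dots ,x_{n-1}$ be $n-1$ vertices of type $1$, none equal to $v$. The starting observation is that for distinct type-$1$ vertices $x,y$ every colour of $C(x)\cap C(y)$ can appear only on the arc joining $x$ and $y$; with this one checks that for each $i$ one may pick an in-arc $e_i$ of $x_i$ with $\Gamma(e_i)\in C(x_i)$ so that $\Gamma(e_1),\dots ,\Gamma(e_{n-1})$ are pairwise distinct (an in-arc of $x_i$ and an in-arc of $x_j$ are never the same arc of $T$). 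Thus $F=\{e_1,\dots ,e_{n-1}\}$ is heterochromatic.

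Next I would examine the shape of $F$. It has $n-1$ arcs, $v$ has in-degree $0$ in $F$, and each $x_i$ has in-degree exactly $1$; hence $F$ is the disjoint union of an out-tree rooted at $v$ and finitely many ``unicyclic'' components $K_1,\dots ,K_m$, each with a single directed cycle, of length at least $3$ (since $T$ has no $2$-cycles). Choose the $e_i$ so that $m$ is as small as possible. If $m=0$ then $F$ itself is the desired tree, so assume $m\ge 1$ and let $K=K_1$ have directed cycle $Z=z_1z_2\cdots z_kz_1$.

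By Lemma \ref{c(x)1} each $z_i$ satisfies $c(z_i)\ge n-4$, so $z_i$ has at most three arcs whose colour is not in $C(z_i)$. If some $z_i$ had an in-arc with colour in $C(z_i)$ whose tail lies outside $V(K)$, replacing in $F$ the cycle-arc of $Z$ entering $z_i$ by this in-arc would keep $F$ heterochromatic, would destroy $Z$, and would merge $K$ with another component, so it would decrease $m$ — contrary to minimality. Hence every in-arc of every $z_i$ with colour in $C(z_i)$ has its tail in $V(K)$; in particular $z_i$ has at most three in-neighbours outside $V(K)$, so $d_T^-(z_i)\le |V(K)|+2$. Now let $H=T-\big(V(K)\setminus\{z_1\}\big)$. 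Then $F$ restricted to $T[V(K)]$ with the cycle-arc of $Z$ entering $z_1$ deleted is a heterochromatic out-tree of $T[V(K)]$ rooted at $z_1$, and every colour it uses is in $C(w)$ for some $w\in V(K)\setminus\{z_1\}$, hence occurs on no arc of $H$. So any heterochromatic out-directed spanning tree of $H$, glued to this out-tree at $z_1$, yields a heterochromatic out-directed spanning tree of $T$. It thus suffices to produce a heterochromatic out-directed spanning tree of $H$: if $|V(H)|\le 2$ this is immediate, and if $|V(H)|\ge 3$ one appeals to the inductive hypothesis, using the bound $d_T^-(z_i)\le |V(K)|+2$ together with the fact that the ``colour excess'' ${n\choose 2}-|\Gamma[T]|$ is at most $\delta_3^-(T)-1$ to rule out the extremal colourings on $H$; where this direct count is not enough one builds the tree of $H$ by hand, for instance from a Hamiltonian path of a suitable heterochromatic subtournament as in the proof of Lemma \ref{c(x)2}.

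The step I expect to be the real obstacle is this last one. The naive count only gives $|\Gamma[H]|\ge {|V(H)|\choose 2}-\delta_3^-(T)+1$, which need not reach $h(H)$ because $\delta_3^-(H)$ may be much smaller than $\delta_3^-(T)$; so one must instead exploit the rigid structure forced here — every cycle vertex of $K$ has all of its ``$C$-coloured'' in-arcs inside $V(K)$, hence small out-degree into $V(K)$ but large out-degree away from $V(K)$ — either to construct the required spanning tree of $H$ directly, or to choose a more convenient set of vertices to delete. That combinatorial case analysis is where the work of the lemma concentrates.
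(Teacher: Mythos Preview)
Your setup coincides with the paper's: assume $n-1$ vertices are of type~1, pick for each an in-arc coloured in its own $C(\cdot)$, and minimise the number of components of the resulting heterochromatic digraph. The structure you describe (one out-tree rooted at the leftover vertex, plus unicyclic pieces) is exactly what the paper calls $D_1,\dots,D_r$.

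The gap is precisely where you flag it. Your plan to delete $V(K)\setminus\{z_1\}$ and invoke the induction hypothesis on $H$ does not close: as you note, you only get $|\Gamma[H]|\ge \binom{|V(H)|}{2}-\delta_3^-(T)+1$, and $\delta_3^-(H)$ can be far below $\delta_3^-(T)$, so this need not reach $h(H)$. You then defer to an unspecified ``combinatorial case analysis'', which is the actual content of the lemma. In particular, the bound $d_T^-(z_i)\le |V(K)|+2$ that you extract is too weak by itself to control $\delta_3^-(H)$ or to build the tree of $H$ directly.

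The paper does not attempt induction here at all. Instead it sharpens your swap observation into the following claim: for $x$ on the cycle $C_2$ and $y$ either the root $z_1$ or a vertex on some other cycle $C_j$ ($j\neq 2$), if the arc $e$ joining $x$ and $y$ has $\Gamma(e)\in C(x)$ then $e$ must be an \emph{ex}-arc of $x$ and $\Gamma(e)$ is \emph{not} $\Gamma_T$-singular. (Both parts are proved by a swap that decreases the number of components, the second after noting that a singular colour in $C(x)$ would also lie in $C(y)$.) Since $c(x)\ge n-4$ forces at least $n-7$ arcs at $x$ to carry singular colours, this claim bounds $|\{z_1\}\cup\bigcup_{j\neq 2}V(C_j)|\le 6$, hence $r\le 3$. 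The cases $r=2$ and $r=3$ are then dispatched by hand: one finds an arc (or two arcs) bridging the components whose colour either does not appear in $D$ or duplicates a removable cycle arc, producing the forbidden heterochromatic spanning out-tree. This direct component-bounding argument is what replaces the induction you were reaching for.
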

\begin{proof} 
Suppose there are at least $n-1$ vertices of type 1.  Let $D$ be a spanning subdigraph of $T$ with the minimum number of connected components whose arc set is obtained as follows:  choose a set $A$ with $n-1$ vertices of type 1, and  for each vertex $x\in A$,  choose one in-arc of $x$ with a color in $C(x)$.  

Clearly $D$ is heterochromatic. Since there are no heterochromatic out-directed spanning trees of $T$,  $D$ is not connected. Let $D_1, D_2, \dots, D_r$  be the connected components of $D$. Since $D$ has $n$ vertices and $n-1$ arcs and the maximum in-degree of $D$ is 1, it is not hard to see that one connected component, say $D_1$, is an out-directed tree, while, for $i=2,3, \ldots, r$, component $D_i$  contains exactly one directed cycle $C_i$ such that $D-e$ is an outdirected tree for each edge $e$ of $C_i$.  Let $z_1$ be  the root of $D_1$ and notice  that $A= V(T)\setminus \{z_1\}$.

\begin{claim}
 Let $x\in V(C_2)$ ,  $y\in \bigcup\limits_{j\not=2} V(C_j) \cup  \{z_1\}$ and $e$ be the arc with endpoints $\{x, y\}$. If $\Gamma(e)\in C(x)$ then $e$ is an ex-arc of $x$ and   $\Gamma(e)$ is not a $\Gamma_T$-singular color.
\end{claim}

Suppose $\Gamma(e)\in C(x)$.  If $e$ is an in-arc of $x$, the digraph $(D-wx) + e$,  with $wx\in A(C_2)\subseteq A(D)$,  has fewer connected components than $D$ and can be obtained in the same way as $D$ by choosing in $C(x)$ the edge $e$ instead of $wx$, which is a contradiction.  Hence $e$ is an ex-arc of $x$, and therefore an in-arc of $y$.    Let us suppose  $\Gamma(e)$ is a $\Gamma_T$-singular color.   Thus $\Gamma(e)\in C(y)$ and $y$ is of type 1.  On the one hand, if $y\in V(C_j)$ for some $j\not= 2$,  in an analogous way as with the vertex $x$, we reach a contradiction.  On the other hand, if $y=z_1$ the digraph $(D - wx) + e$ (which  has fewer connected components than $D$) can be obtained in the same way as $D$ by choosing the set $A' = (A\setminus\{x\}) \cup \{z\}$ as the set of  $n-1$ vertices of type 1 and choosing the edge $e$ in $C(z_1)$ instead of the edge $wx$ in $C(x)$, which is a contradiction. From here, Claim 1 follows.

Let $x\in V(C_2)$. Since $c(x) =  n-4$ it follows  there are at least $n-7$ arcs incident to $x$ with $\Gamma_T$-singular colors.   Thus, by Claim 1 it follows that $| \{z_1\}\cup  \bigcup\limits_{j\not=2} V(C_j)|\leq 6$ and therefore $r\leq 3$.  Let us suppose $r=2$ and let $e$ be the arc with endpoints $\{ z_1, x\}$.  The color $\Gamma(e)$  must appear in $D$, otherwise $D+e$ is a heterochromatic digraph containing an out-directed spanning tree of $T$ which is a contradiction.  By the choice of the arcs of $D$, $\Gamma(e)\in C(x)$ and there is an arc $wx\in A(C_2)$ with color $\Gamma(e)$, but then $(D-wx) + e$ is a heterochromatic out-directed spanning tree of $T$ which is a contradiction. Thus $r=3$.    Since $c(x) =  n-4$ and $| \{z_1\}\cup V(C_3)|\geq 4$,  there is a color $c\in C(x)$ which only appears in arcs incident to $x$ and with the other endpoint in  $ V(C_3) \cup  \{z_1\}$ . By Claim 1, these arcs are ex-arcs of $x$ and there are at least two of them, since $c$ is not   a $\Gamma_T$-singular color.  Thus there is $y\in V(C_3)$ such that $\Gamma(xy) = c$.  Let $w\in V(C_2)\setminus \{x\}$ and let 
$e$ be the arc with endpoints $\{ z_1, w\}$.   The color $\Gamma(e)$  must appear in $D+xy$, otherwise $D+\{xy, e\}$ is a heterochromatic digraph containing an out-directed spanning tree of $T$ which is a contradiction.  Thus, by the choice of the arcs of $D$ and since $\Gamma(xy)\in C(x)$,  $\Gamma(e)\in C(w)$ and there is an arc $ww'\in A(C_2)$ with color $\Gamma(e)$, but then $(D-ww') + \{xy, e\}$ is a heterochromatic digraph containing an out-directed spanning tree of $T$ which is a contradiction. This ends the proof of Lemma \ref{tipo1}.  \end{proof}

Now we return to the proof of Theorem \ref{teorema}.  First we will show that there is an arc $x_1x_2\in A(T)$ and a vertex $x_3\in V(T)\setminus\{x_1, x_2\}$ such that the spanning subdigraph $D$ of $T$ with set of arcs 
$$A(D) = \Big(A(T)\setminus \bigcup\limits_{i=1}^3 F_T^-(\{x_i\})\Big)\cup \{x_1x_2\}$$ is an heterochromatic spanning subdigraph of $T$  with $h(T)-1$ arcs.  Observe that  these will imply that $$h(T)-1=  |A(D)| = {{n}\choose{2}} - \Big(d^-_T(x_1) + d^-_T(x_2) + d^-_T(x_3)\Big) +1 \leq  {{n}\choose{2}} -\delta_3^-(T) +1$$  which  will prove the first part of the theorem. 

 Recall that if $v$ is a vertex of $T$ of type 3, then all the in-arcs of $v$ recieve the same color. For each such vertex $v$ we denote by $c_v$ the color assigned to every in-arc of $v$. 

Now we will choose a pair of vertices $\{x, y\}$ in the following way:  By Lemma ~\ref{tipo1} there are at least two vertices that are not  of type 1. If there are at least two vertices of type 3, choose $x$ and $y$ to be vertices of type 3 such that $c_x = c_y$ if possible, otherwise chose any two vertices of type 3. If there is exactly one vertex of type 3, choose it together with any vertex of type 2.  Otherwise choose $x$ and $y$ to be vertices of type 2. 

Without loss of generality assume $xy\in A(T)$ and let $c_0 = \Gamma(xy)$. Let  $D$ be a maximal heterochromatic spanning subdigraph of $A(T)\setminus F_T^-[\{x, y\}] \cup xy$ that contains $xy$.  Observe that the number of arcs in $D$ is
 \begin{equation}\label{eq0}|A(D)|=\Gamma[T] - k(x,y),
 \end{equation}
 where $k(x,y)$ is the number of colors that only appear in the set of arcs  $F_T^-[\{x, y\}]$.

\begin{claim}
$k(x, y) = 0$. 
\end{claim}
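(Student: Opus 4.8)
The plan is to argue by contradiction: suppose $k(x,y)\geq 1$, so there is a color, say $\alpha$, that appears only on arcs of $F_T^-[\{x,y\}]$, i.e. on in-arcs of $x$ or of $y$. The overarching idea is that such a color $\alpha$ is ``wasted'' by $D$, and we will recover it by adding back a single in-arc of $x$ or $y$ carrying color $\alpha$ to a heterochromatic out-directed spanning tree of the smaller tournament obtained by deleting $x$ (or $y$), contradicting the induction hypothesis or the choice of $\Gamma$. First I would record what the choice of the pair $\{x,y\}$ in the paragraph preceding the claim buys us: neither $x$ nor $y$ is of type 1, so no in-arc of $x$ lies in $C(x)$ and no in-arc of $y$ lies in $C(y)$; moreover by Lemmas~\ref{c(x)2} (and the type-3 bookkeeping) we control $c(x),c(y)$ and the in-degrees, and in the type-3 case all in-arcs of $x$ share the color $c_x$ and all in-arcs of $y$ share $c_y$.

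Next I would split on which side the color $\alpha$ lives on and on the types of $x$ and $y$. Say $\alpha$ appears on an in-arc $ux$ of $x$ (the case $uy$ is symmetric). Consider the tournament $T-x$. The key subclaim is that $T-x$ has no heterochromatic out-directed spanning tree in the coloring $\Gamma$ restricted to $T-x$ after possibly identifying one extra color (exactly as in the proof of Lemma~\ref{c(x)2}): if $S$ were such a tree, then since $\alpha\notin\Gamma[T-x]$ (because $\alpha$ occurs only on in-arcs of $x$ or $y$, and if it also occurred on an in-arc of $y$ we use that $y\neq$ root of $S$ can be assumed, or handle the degenerate small cases separately), the tree $S+ux$ would be a heterochromatic out-directed spanning tree of $T$, a contradiction. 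Then, counting colors as in the earlier lemmas, $|\Gamma[T-x]|\leq \binom{n-1}{2}-\delta_3^-(T-x)+1$, which, combined with $|\Gamma[T]|\geq\binom{n}{2}-\delta_3^-(T)+1$ and $\delta_3^-(T)-\delta_3^-(T-x)\leq 3$, forces $c(x)\geq n-4$ with equality, and further forces the extremal structure from the ``Moreover'' part of Theorem~\ref{teorema} applied to $T-x$.

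Finally, from that extremal structure I would derive the contradiction. The induction hypothesis gives a triple $\{x_1,x_2,x_3\}\subseteq V(T-x)$ all of whose in-arcs in $T-x$ share one color, with every other arc of $T-x$ singular; but $\alpha$ is not singular in $T$ only if it appears $\geq 2$ times, and since $\alpha$ appears only on in-arcs of $x$ and $y$, at most one $\alpha$-arc survives in $T-x$, so $\alpha$ would have to be a singular color of $\Gamma$ itself, contradicting $k(x,y)\geq 1$ unless $\alpha$ appears on at least two in-arcs all incident with $y$ — and in that last sub-case, using that $y$ is not of type 1 (so $\alpha\notin C(y)$, meaning $\alpha$ also shows up on an ex-arc of $y$, which lies in $T-x$), we again get a surviving $\alpha$-arc and hence a heterochromatic tree plus $\alpha$-arc. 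The main obstacle I anticipate is handling the boundary/degenerate cases cleanly: small $n$ (so that $n-4$, $n-5$ etc.\ are not yet meaningful), the possibility that $u\in\{y\}$ or $u$ is forced to be the root, and disentangling whether $\alpha$ sits on $x$'s side, $y$'s side, or genuinely straddles both — this case analysis, rather than any single hard inequality, is where the real work lies, and it should mirror very closely the recoloring-and-counting bookkeeping already carried out in Lemmas~\ref{c(x)1} and~\ref{c(x)2}.
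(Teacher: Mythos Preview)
Your approach (delete $x$ and invoke induction on $T-x$) is genuinely different from the paper's, but as written it has a real gap at the step ``$\alpha\notin\Gamma[T-x]$.'' In fact this is essentially never true. Since $x$ is not of type~1, the colour $\alpha$ of the in-arc $ux$ satisfies $\alpha\notin C(x)$, so $\alpha$ must appear on some arc \emph{not} incident with $x$; combined with the hypothesis that $\alpha$ occurs only on in-arcs of $x$ or $y$, this forces an arc $z_yy$ with $z_y\neq x$ and $\Gamma(z_yy)=\alpha$. That arc lives in $T-x$, so $\alpha\in\Gamma[T-x]$, and a heterochromatic spanning tree $S$ of $T-x$ may perfectly well use $z_yy$ as the unique in-arc of $y$; then $S+ux$ repeats colour $\alpha$. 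Your patch ``assume $y$ is not the root'' does not help here---it is precisely when $y$ is not the root that $S$ must contain an in-arc of $y$, possibly $z_yy$. Your later patch also misreads the definition: $\alpha\notin C(y)$ means $\alpha$ appears on some arc not \emph{incident} to $y$ (hence on an in-arc of $x$, which disappears in $T-x$), not that $\alpha$ appears on an ex-arc of $y$.

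The paper avoids this trap by not deleting anything. It first observes (exactly via the argument above) that $c_1=\alpha$ hits both an in-arc $z_xx$ and an in-arc $z_yy$, whence $y$ has in-arcs of two distinct colours $c_0,c_1$ and is therefore of type~2. Lemma~\ref{c(x)2} then gives $c(y)=n-4$, so $\{xy\}$ together with $n-4$ ex-arcs of $y$ carrying the colours of $C(y)$ is already a heterochromatic out-tree of order $n-2$ rooted at $x$. Only two vertices $w_1,w_2$ remain, and the specific rule for choosing the pair $\{x,y\}$ (type-3 vertices are selected first) now bites: since $y$ turned out to be type~2, neither $w_1$ nor $w_2$ can be type~3, and a short case check on their types produces in-arcs $e_1,e_2$ whose colours avoid $C(y)$ and $c_1$; together with $z_xx$ (colour $c_1$) this completes a heterochromatic out-directed spanning tree of $T$. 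The crucial ingredient you are missing is this use of the selection rule for $\{x,y\}$ to control the types of the two leftover vertices.
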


Suppose $k(x,y)\geq 1$ and let $c_1$  be a color that only appears in the set of arcs $F_T^-(\{x,y\})$.  Since neither $x$ nor $y$ are of type 1, $c_1\not\in C(x)\cup C(y)$, there is a pair of arcs  $\{z_xx, z_yy \}\subseteq F_T^-[\{x, y\}]$   (where $z_x$ and $z_y$ are not necessarily different) such that $\Gamma(z_xx) = \Gamma(z_yy) = c_1$.  Since $y$ is not of type 1 and $\Gamma(xy) = c_0\not = c_1 =\Gamma(z_yy)$,  it follows that $y$ is of type 2. 

Let  $A= \{yx_1, yx_2, \ldots, yx_{c(y)} \}$ be a set of ex-arcs of $y$, all of them with different colors in $C(y)$,   contained in $A(D)$. By Lemma~\ref{c(x)2}, $c(y)= n-4$, since $y$ is of type 2.  Thus $\{xy\}\cup A$ induces a heterochromatic out-directed tree of order $n-2$,  with root $x$ and with colors in $\{c_0\}\cup C(y)$. 

Let $\{w_1, w_2\} = V(T) \setminus\big( \{x, y\} \cup \{x_i : yx_i\in A\}\big)$. Observe that $z_y\in \{w_1, w_2\}$ and, without loss of generality, assume $z_y = w_1$.   Since $y$ is of type 2, by the way $x$ and $y$ were chosen, it follows that neither $w_1$ nor $w_2$ is of type 3. For $i=1, 2$, observe that if $w_i$ is of type 1, then there is an in-arc of $w_i$ with a color in $C(w_i)$, which does not appear in $xy\cup A$. Also notice that if $w_1$ is of type 2, then there are two in-arcs of $w_1$, with different colors such that those colors are not in $C(y)$ and that if $w_2$ is of type 2, then there are two in-arcs of $w_2$, also with different colors, such that at least one of those colors is not in $C(y)$ (maybe $yw_2\in A(T)$ and $\Gamma(yw_2)\in C(y)$). 

In any case, there exist  in-arcs $e_1$ of $w_1= z_y$ and $e_2$ of $w_2$  with different colors,  none of them with color in $C(y)$, none of them with  color $c_1$ (recall that all the arcs of color $c_1$ are in-arcs of $x$ and $y$), and maybe one of them with color $c_0$. Since $\Gamma(z_xx) = c_1$, it follows that $A\cup \{xy, z_xx, e_1, e_2\}$ contains a heterochromatic out-directed spanning tree of $T$ which is not possible and therefore, Claim 2 holds.

Since $k(x,y)=0$ and $|A(D)|=\Gamma[T] - k(x,y)$, we see that the number of arcs in $D$ is $\Gamma[T] = h(T)-1 \geq  {{n}\choose{2}} - \delta^-_3(T) + 1$.  Notice that none of the in-arcs of $x$ are in $A(D)$ and, except for $xy$, none of the in-arcs of $y$ are in $A(D)$.  Let $H\subseteq V(T)$ be the set of vertices which are reachable from $x$ by directed paths in $D$. Since $T$ has no heterochromatic out-directed spanning tree with respect to $\Gamma$, it follows that $W= V(T)\setminus H \not= \emptyset$.  Thus, none of the arcs in $F_T^-(W)$ are present in $D$.  Therefore,   \begin{equation}\label{eq1}|A(D)| = {{n}\choose {2}} - d_T^-(x) - d_T^-(y) - |F_T^-(W)| +1 - \alpha\end{equation}  with $\alpha \geq 0$ (maybe other arcs in $A(T)\setminus \Big( F_T^-(W) \cup F_T^-(\{x, y\})\Big)$ do not appear in $D$).  

Since $$|A(D)| \geq {{n}\choose{2}} - \delta^-_3(T) + 1,$$  it follows from (\ref{eq1})  that  \begin{equation}\label{eq3} \delta^-_3(T) \geq d_T^-(x)  + d_T^-(y) +  |F_T^-(W)|  + \alpha.\end{equation}

It is not hard to see that $|F_T^-(W)| = \sum\limits_{z\in W} d_T^-(z)  - {{|W|}\choose {2}}$ and therefore \begin{equation}\label{eq2}d_T^-(x)  + d_T^-(y) +  |F_T^-(W)|  + \alpha =  \sum\limits_{z\in W\cup\{x,y\}} d_T^-(z)  - {{|W|}\choose {2}}  + \alpha.\end{equation}

On the other hand, by an averaging argument we see that $$\Big(\frac{3}{|W|+2}\Big)  \sum\limits_{z\in W\cup\{x,y\}} d_T^-(z)  \geq \delta^-_3(T) $$ and then, by~(\ref{eq3}) and (\ref{eq2}), 
$$\Big(\frac{3}{|W|+2}\Big)  \sum\limits_{z\in W\cup\{x,y\}} d_T^-(z)  \geq  \sum\limits_{z\in W\cup\{x,y\}} d_T^-(z)  - {{|W|}\choose {2}}  + \alpha$$

Therefore  $${{|W|}\choose {2}}\geq \Big(\frac{|W|-1}{|W|+2}\Big)  \sum\limits_{z\in W\cup\{x,y\}} d_T^-(z)  + \alpha, $$ but since $\sum\limits_{z\in W\cup\{x,y\}} d_T^-(z) \geq  {{|W|+2}\choose {2}}$, we see that $${{|W|}\choose {2}}\geq \frac{(|W|+1)(|W|-1)}{2}+\alpha$$ and hence \begin{equation}\label{eq4} 0 \geq \frac{|W| -1}{2} +\alpha.\end{equation}

\noindent Since $W\not=\emptyset$,  $\frac{|W|-1}{2}\geq 0$ and then,  from~(\ref{eq4}) it follows that  $|W| = 1$ and $\alpha = 0$.   Let $\{w\} = W$. Clearly $|F_T^-(W)| = d_T^-(w)$, and by~(\ref{eq3})  we see that $$\delta^-_3(T) \geq  d_T^-(x)  + d_T^-(y) +  |F_T^-(W)|  + \alpha = d_T^-(x)  + d_T^-(y) +  d_T^-(w)$$ which,  by definition of $ \delta^-_3(T)$ implies that  \begin{equation}\label{eq8}\delta^-_3(T) =  d_T^-(x)  + d_T^-(y) +  d_T^-(w).\end{equation}  

Since $\alpha =0$,  it follows that  all the arcs of $T$ are present in $D$  except for the in-arcs of $x$, the in-arcs of $w$ and,  besides the arc $xy$, all the in-arcs of $y$.   Thus  $$A(D) = \Big(A(T)\setminus \bigcup\limits_{z\in\{x,y,w\}} F_T^-(\{z\})\Big)\cup \{xy\}$$ and 
$$|A(D)| = {{n}\choose{2}} - (  d^-(x)  + d^-(y) +  d^-(w))+ 1 =  {{n}\choose{2}} - \delta^-_3(T) + 1,$$ and since $|A(D)|= h(T) -1$ it follows that \begin{equation}\label{eq5}  h(T) =  {{n}\choose{2}} - \delta^-_3(T) + 2.\end{equation}

From  here,  to end the proof  of Theorem \ref{teorema}  just remain to show that all the in-arcs of $x, y,$ and $w$ receive the same color.  For this, first we will prove that all the in-arcs of $w$ receive color $c_0$.  Let suppose there is an arc $zw\in A(T)$ such that $\Gamma(zw)= c_3\not =  c_0$.  Since all the colors in $\Gamma[T]$ are present in $A(D)$,   there is an arc $z'w' \in A(D)$ such that $\Gamma(z'w') = c_3$. Notice that  $w'\not\in\{x, y, w\}$, since no in-arcs of $x$ nor $w$ are present in $D$ and  the only in-arc of $y$ in $D$ has color $c_0$.  Let $D' = \Big(D\setminus z'w'\Big) \cup zw$.  Observe that both vertices $z$ and $z'$ are reachable from $x$ in both digraphs $D$ and $D'$.  Also notice that $D'$ is a maximal heterochromatic spanning subdigraph of $A(T)\setminus F_T^-[\{x, y\}] \cup xy$ that contains $xy$.  Thus, by an analogous procedure as for $D$, we find that in $D'$ there is a vertex $v$ such that all the arcs of $T$ are present  in $D'$ with exception of the in-arcs of $x$, the in-arcs of $v$ and,  besides the arc $xy$, all the in-arcs of $y$.  Since $w'$ has an in-arc missing in $D'$ and $v\not\in\{x,y\}$, it follows that $v = w'$. 

Since $w\not = w'$,  either $ww'\in A(T)$  or $w'w\in A(T)$.   If $w'w\in A(T)$, $w'w\not\in A(D)$  but $w'w\in A(D')$, and since $D' = \Big(D\setminus z'w'\Big) \cup zw$ it follows that   $zw=w'w$ and $w'=z$ which is not possible since $z$ is reachable from $x$ in $D'$ and $w'$ is not reachable from $x$ in $D'$. In an analogous way, if $ww'\in A(T)$, $ww'\not\in A(D')$ but $ww'\in A(D)$ and  then $z'w'=ww'$ and $w=z'$, which is not possible since $z'$ is reachable from $x$ in $D$ and $w$ is not. 

Therefore all the in-arcs of $w$ receive color $c_0$. Thus $w$ is a vertex of type 3, and,  by the way the pair $\{x, y\}$ were chosen,  this implies that  $\{w,x, y\}$ is a triple of vertices of type 3, and since $c_0=c_x = c_w$,  again,  by the way the pair $\{x, y\}$ were chosen, $c_y=c_x$. Therefore  all the in-arcs of the triple $\{x,y,w\}$ receive the same color $c_0$ and this ends the proof of Theorem \ref{teorema}.

\end{document}